\newcommand{\Dc}{\mathcal{D}}
\newcommand{\Lc}{\mathcal{L}}
\newcommand{\Vs}{\mathcal{V}_{\rm sys}}
\DeclareMathOperator{\re}{Re}
\DeclareMathOperator{\diag}{diag}
\DeclareMathOperator{\ran}{ran}
\newcommand{\R}{\mathbb{R}}
\newcommand{\C}{\mathbb{C}}
\newcommand{\N}{\mathbb{N}}
\newcommand{\ddt}{\tfrac{{\rm d}}{{\rm d}t}}
\newenvironment{smallpmatrix}
{\left(\begin{smallmatrix}}
{\end{smallmatrix}\right)}
\newenvironment{smallbmatrix}
{\left[\begin{smallmatrix}}
{\end{smallmatrix}\right]}
\pgfplotsset{compat=1.14}
\theoremstyle{plain}
\newtheorem{definition}{Definition}[section]
\newtheorem{prop}[definition]{Proposition}
\newtheorem{lem}[definition]{Lemma}
\newtheorem{corollary}[definition]{Corollary}
\newtheorem{rem}[definition]{Remark}
\title{On the stability of port-Hamiltonian descriptor systems
 \thanks{H.G. acknowledges the support by the Deutsche Forschungsgemeinschaft (DFG) within the Research Training Group GRK 2583 "Modeling, Simulation and Optimization of Fluid Dynamic Applications”.}
        }
\author{Hannes Gernandt\thanks{Institut für Mathematik, TU Berlin, Stra\ss e des 17. Juni 136, 10587 Berlin, Germany ({\tt gernandt@math.tu-berlin.de}).}
        \and Fr\'{e}d\'{e}ric E. Haller \thanks{Universit\"{a}t Hamburg, Bundesstraße 55, 20146 Hamburg ({\tt  frederic.haller@uni-hamburg.de}).}}
\begin{document}

\maketitle

\begin{abstract}
We characterize stable differential-algebraic equations (DAEs) using a generalized Lyapunov inequality. The solution of this inequality is then used to rewrite stable DAEs as dissipative Hamiltonian (dH) DAEs  on the subspace where the solutions evolve. Conversely, we give sufficient conditions guaranteeing stability of dH DAEs. Further, for stabilizable descriptor systems we construct solutions of generalized algebraic Bernoulli equations which can then be used to rewrite these systems as pH descriptor systems. Furthermore, we show how to describe the stable and stabilizable systems using Dirac and Lagrange structures.
\end{abstract}
\textit{Keywords:} Descriptor systems, port-Hamiltonian systems, stability, differential-algebraic equations, linear matrix inequalities, Dirac structure.\\







\section{Introduction}

In this note, we consider generalized port-Hamiltonian systems (pH systems) of the form
\begin{align}
\label{ph_sys}
\begin{split}
    \tfrac{\rm d}{{\rm d} t} Ex(t)&=(J-R)Qx(t)+(B-P)u(t),\\
    y(t)&=(B+P)^TQx(t)+(S-N)u(t),
\end{split}
\end{align}
where $E,J,R,Q\in\R^{n\times n}$, $B,P\in\R^{n\times k}$ and $S,N\in\R^{k\times k}$ 
and 
\begin{align}
\label{ph_cond}
\begin{smallbmatrix}
R & P\\P^T&S
\end{smallbmatrix}\geq 0,\ \begin{smallbmatrix}
J & B\\-B^T&N
\end{smallbmatrix}=-\begin{smallbmatrix}
J & B\\-B^T&N
\end{smallbmatrix}^T,\ Q^TE=E^TQ,
\end{align}
see e.g.\ \cite{MehrMora19}. In comparison to classical port-Hamiltonian systems where the coefficient $E$ is the identity, here it might be singular. 

If in \eqref{ph_sys} neither inputs nor outputs are present then the system reduces to the differential-algebraic equation (DAE)
\begin{align}
\label{ph_DAE_i}
\tfrac{\rm d}{{\rm d} t} Ex(t)=(J-R)Qx(t)
\end{align}
where the conditions \eqref{ph_cond} simplify to 
\begin{equation}\label{eq:JRQE}
R\geq 0,\quad J=-J^T, \quad  Q^TE=E^TQ.
\end{equation}
If we further assume $Q^TE\geq0$, these equations are called \emph{dissipative Hamiltonian DAEs} (dH~DAEs), see \cite{MehlMehrWojt20}.

The first aim of this paper is to show that stable DAEs
\begin{align}
\label{dae}
\ddt Ex(t)=Ax(t)
\end{align}
can be rewritten as dH~DAEs \eqref{ph_DAE_i} where the matrix $Q$ is given by the solution of a generalized Lyapunov inequality. 
For this rewriting we have to restrict the coefficients of \eqref{dae} to the smallest subspace where the solutions evolve, also called the \emph{system space}, see \cite{ReisRend15}. Furthermore, it is shown that such a reformulation as pH systems is also possible for stabilizable systems 
\[\ddt Ex(t)=Ax(t)+Bu(t)\]
by introducing a suitable output and using the solutions of generalized algebraic Bernoulli equations. 

The second aim of the paper is to study the stability of DAEs \eqref{dae} which are given in the geometric pH framework from \cite{MascvdSc18}. This more general framework contains dH DAEs \eqref{ph_DAE_i} which was recently shown in \cite{GernHall21}. Here it is shown how systems of the form \eqref{ph_sys} can be embedded into the more general geometric pH framework from \cite{MascvdSc18}.


The outline of the paper is as follows. After recalling some terminologies of matrix pencils in Section~\ref{sec:pre}, we study the behavioral stability of DAEs in Section~\ref{sec:stab}. We characterize the stability of DAEs using a generalized Lyapunov inequality and use their solutions to rewrite stable DAEs as dH DAEs. Further, we prove that dH DAEs  which fulfill $\ker Q\subseteq\ker E$ are stable. In Section~\ref{sec:stabi} we rewrite behaviorally stabilizable DAEs as pH systems~\eqref{ph_sys}. 
Finally, in Section~\ref{sec:geo} we embedded dH DAEs \eqref{ph_DAE_i} and pH systems~\eqref{ph_sys} into the geometric framework from \cite{MascvdSc18}.

\section{Preliminaries} 
\label{sec:pre}
Matrix pencils $sE-A$ are linear matrix polynomials with coefficients in $E,A\in\R^{n\times n}$. We briefly write $sE-A\in\R[s]^{n\times n}$ and this pencil is called \emph{regular} if $\det(sE-A)\in\R[s]$ is non-zero. The \emph{spectrum} $\sigma(E,A)$ of a matrix pencil $sE-A$ is the set of all complex numbers $\lambda$ for which $\lambda E-A$ is not invertible. An eigenvalue is called simple if $\ker(\lambda E-A)$ has dimension one and semi-simple if the dimension of this subspace  coincides with the multiplicity of $\lambda$ as a root of $\det(sE-A)$. 

Recall that every pencil $sE-A\in\R[s]^{n\times m}$ can be transformed to quasi Kronecker form, see e.g. \cite{BergTren12}, i.e.,\ there exists invertible $S\in\R^{n\times n}$ and $T\in\R^{m\times m}$ such that $S(sE-A)T$ is block diagonal with the following four types of blocks 
\begin{subequations}
\label{kcf}
\begin{align}
\label{kcf_a}
sI_{n_0}-A_0&\in\R[s]^{n_
0\times n_0}, 
\end{align}
\begin{align}
sN_{\alpha_i}-I_{\alpha_i}&=\begin{smallbmatrix}
-1&s&&
\\&\ddots&\ddots&
\\&&\ddots&~~s
\\&&&-1
\end{smallbmatrix}\in\R[s]^{\alpha_i\times \alpha_i},\\
sK_{\beta_i}-L_{\beta_i}&=\begin{smallbmatrix}
s&-1&&
\\&\ddots&\ddots&
\\&&s&-1
\end{smallbmatrix}\in\R[s]^{(\beta_i-1)\times \beta_i},\\
\quad  sK_{\gamma_i}^T-L_{\gamma_i}^T&\in\R[s]^{\gamma_i\times (\gamma_i-1)}.
\end{align}
\end{subequations}
The above indices are collected in multi-indices $\alpha=(\alpha_1,\ldots,\alpha_{\ell_{\alpha}})\in\N^{\ell_\alpha}$, $\beta=(\beta_1,\ldots,\beta_{\ell_{\beta}})\in\N^{\ell_\beta}$,  $\gamma=(\gamma_1,\ldots,\gamma_{\ell_{\gamma}})\in\N^{\ell_\gamma}$. For a multi-index $\delta\in\N^{\ell_\delta}$ recall $|\delta|=\sum_{i=1}^{\ell_\delta}\delta_i$. 
Note that for $\beta_i=1$ and $\gamma_i=1$ one adds a zero column and a zero row to the pencil, respectively. The largest $\alpha_i$ is called the \emph{index} of the DAE. Furthermore, we write $\C_+\coloneqq \{z\in\C~|~\re z>0\}$, $\C_-\coloneqq \{z\in\C~|~\re z<0\}$ and  $\overline{\C_-}\coloneqq \C\setminus\C_+$.

\section{Behavioral stable and dH DAEs}
\label{sec:stab}
In this section, we focus on DAEs
\begin{align}
\label{DAE_2}
\ddt Ex(t)=Ax(t),
\end{align}
with $E,A\in\R^{n\times n}$ and denote the set of all such pairs $[E,A]$ by $\Sigma_n$. Here we follow the behavioral approach from \cite{PoldWill97} and introduce the \emph{behavior} of $[E,A]$  by 
\begin{align*}
\mathfrak{B}_{[E,A]}^\infty\coloneqq\{x\in C^\infty(\R,\R^n) \, \big|  \tfrac{{\rm d}}{{\rm d} t}Ex=Ax\},
\end{align*}
A definition with less smoothness assumptions can be found in \cite{BergReis13}.

We say that $[E,A]\in\Sigma_n$ is \emph{stable} 
if 
\[
\forall x\in\mathfrak{B}_{[E,A]}^\infty\,\, \exists M\geq 0 :\quad  \sup_{t\geq 0}\|x(t)\|\leq M.
\]

The aim is to show that stable DAEs  $[E,A]$ can be rewritten as a dH DAE \eqref{ph_DAE_i}, i.e.,
\begin{align}
\label{pH_DAE}
\begin{split}
&\tfrac{\rm d }{{\rm d}t}Ex(t)=(J-R)Qx(t),\\ &J=-J^T,\quad  R\geq 0,\quad  Q^TE\geq 0. 
\end{split}
\end{align}

To this end, we introduce the \emph{system space} as follows
\begin{align*}
\Vs^{[E,A]}&\coloneqq \{x(0)~|~ x\in \mathfrak{B}_{[E,A]}^\infty\}\subseteq\R^{n},
\end{align*}
see also \cite{ReisRend15} for an equivalent definition. 
To determine the system space first observe that the behavior and hence the system space transforms as follows
\begin{align}
\label{EA_inv}
\mathfrak{B}_{[SET,SAT]}^\infty&=T^{-1}\mathfrak{B}_{[E,A]}^\infty,\quad \Vs^{[SET,SAT]}&=T^{-1}\Vs^{[E,A]}
\end{align}
for invertible $S,T\in\R^{n\times n}$. This combined with the quasi Kronecker form \eqref{kcf} is the following representation of the system space.
\begin{lem}
\label{lem:sys_sp}
Let $[E,A]\in\Sigma_{n}$ with invertible $S,T\in\R^{n\times n}$ such that $S(sE-A)T$ is in quasi Kronecker form \eqref{kcf}. Then 
\begin{align}
\label{kron_vsys}
\Vs^{[E,A]}=T(\R^{n_0}\times \{0\}^{|\alpha|}\times \R^{|\beta|}\times \{0\}^{|\gamma|-\ell_\gamma}).
\end{align}
\end{lem}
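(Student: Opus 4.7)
The plan is to exploit the invariance \eqref{EA_inv}, reduce to the quasi Kronecker form, and then compute the system space block by block. Because the behavior of a block-diagonal pencil is the Cartesian product of the behaviors of the individual blocks (the ODE decouples), the same is true for the system space, so it suffices to determine $\Vs$ for each of the four block types in \eqref{kcf}.

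First I would treat the finite regular block $sI_{n_0}-A_0$: here $\dot x = A_0 x$ is an ordinary linear ODE, so every initial value in $\R^{n_0}$ is attained by a smooth solution $x(t)=e^{tA_0}x(0)$, giving $\Vs = \R^{n_0}$. For the nilpotent block $sN_{\alpha_i}-I_{\alpha_i}$, the equation reads $N_{\alpha_i}\dot x(t) = x(t)$; iterating yields $x = N_{\alpha_i}^k x^{(k)}$ for every $k\in\N$, and since $N_{\alpha_i}$ is nilpotent of index $\alpha_i$, one obtains $x\equiv 0$, hence $\Vs = \{0\}^{\alpha_i}$.

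For the right Kronecker block $sK_{\beta_i}-L_{\beta_i}\in\R[s]^{(\beta_i-1)\times\beta_i}$, the ODE reads $\dot x_j = x_{j+1}$ for $j=1,\dots,\beta_i-1$. Given any prescribed initial vector $x(0)\in\R^{\beta_i}$, I would construct a smooth solution by choosing $x_1\in C^\infty(\R,\R)$ with $x_1^{(j-1)}(0)=x_j(0)$ for $j=1,\dots,\beta_i$ (possible e.g.\ by a truncated Taylor polynomial) and setting $x_j\coloneqq x_1^{(j-1)}$; thus $\Vs=\R^{\beta_i}$. For the left Kronecker block $sK_{\gamma_i}^T-L_{\gamma_i}^T\in\R[s]^{\gamma_i\times(\gamma_i-1)}$, the system is overdetermined: reading the last equation gives $x_{\gamma_i-1}=0$, the next-to-last then forces $x_{\gamma_i-2}=\dot x_{\gamma_i-1}=0$, and proceeding upward one obtains $x\equiv 0$, so $\Vs=\{0\}^{\gamma_i-1}$.

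Combining the four block contributions and summing the dimensions (using $\sum_i(\gamma_i-1)=|\gamma|-\ell_\gamma$) yields the system space of the Kronecker-form pencil as $\R^{n_0}\times\{0\}^{|\alpha|}\times\R^{|\beta|}\times\{0\}^{|\gamma|-\ell_\gamma}$, and pulling back via \eqref{EA_inv} with the invertible $T$ produces the formula \eqref{kron_vsys}. No step is really an obstacle; the only point that needs a sentence of care is the construction of a smooth solution with prescribed initial data for the right Kronecker blocks, to justify that \emph{all} of $\R^{\beta_i}$ is attainable and not just a proper subspace.
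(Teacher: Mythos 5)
Your proof is correct and follows exactly the route the paper intends: the paper gives no explicit proof, only the remark that \eqref{EA_inv} combined with the quasi Kronecker form yields \eqref{kron_vsys}, and your block-by-block computation (ODE block gives $\R^{n_0}$, nilpotent block forces $x=N^kx^{(k)}=0$, underdetermined $\beta$-blocks give all of $\R^{\beta_i}$ via a smooth function with prescribed jet at $0$, overdetermined $\gamma$-blocks force $x\equiv 0$) is precisely the omitted argument. No gaps.
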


Below, the stability is characterized either in terms of the eigenvalues or a Lyapunov inequality on the system space. To this end, for a subspace $\Lc\subseteq\R^n$ and matrices $M,N\in\R^{n\times n}$ we say that
\[M=_{\Lc}N\quad :\Leftrightarrow\quad Mx=Nx\quad \forall ~x\in\Lc.\]
If we further assume that $M$ and $N$ are symmetric, we say that
\begin{align*}
M\geq_{\Lc}N\quad &:\Leftrightarrow\quad x^TMx\geq x^TNx\quad \forall ~x\in\Lc,\\
M>_{\Lc}N\quad &:\Leftrightarrow\quad x^TMx>x^TNx\quad \forall~x\in\Lc\setminus\{0\}.
\end{align*}
\begin{prop}
\label{prop:fred}
Let $[E,A]\in\Sigma_n$ then the following statements are equivalent.
\begin{itemize}
\item[\rm (a)] $[E,A]$ is stable.
\item[\rm (b)] $sE-A$ is regular and there exists a symmetric $X\in\R^{n\times n}$ with $X>_{E\Vs^{[E,A]}}0$ and  $X(E\Vs^{[E,A]})=E\Vs^{[E,A]}$ such that
\begin{align}
\label{lyap}
A^TXE+E^TXA\leq_{\Vs^{[E,A]}}0.
\end{align}
\item[\rm (c)] The pencil $sE-A$ is regular,  $\sigma(E,A)\subseteq\overline{\C_-}$ and the eigenvalues on the imaginary axis are semi-simple. 
\end{itemize}
\end{prop}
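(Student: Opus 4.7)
My plan is to close the equivalences through (a)$\Leftrightarrow$(c) and (c)$\Rightarrow$(b)$\Rightarrow$(c), since statement (c) is the easiest to manipulate via the quasi Kronecker form and Lemma~\ref{lem:sys_sp}. For (a)$\Leftrightarrow$(c) I would pass to the quasi Kronecker form and analyze the behavior block by block. The $sI_{n_0}-A_0$ block is a genuine ODE whose bounded-trajectory criterion is the classical $\sigma(A_0)\subseteq\overline{\C_-}$ with semi-simple eigenvalues on the imaginary axis. The $sN_{\alpha_i}-I_{\alpha_i}$ blocks admit only the zero solution (solved from the bottom up), and the over-determined $sK_{\gamma_i}^T-L_{\gamma_i}^T$ blocks likewise force every coordinate to vanish. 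By contrast, the under-determined $sK_{\beta_i}-L_{\beta_i}$ blocks carry a free smooth coordinate whose iterated antiderivatives give polynomially unbounded solutions, so stability forbids such blocks. Since $E,A$ are square one has $\ell_\beta=\ell_\gamma$, hence the absence of $K_{\beta_i}$-blocks is equivalent to the absence of $K_{\gamma_i}^T$-blocks, i.e.\ regularity of the pencil, and together with the spectral condition on $A_0$ this is precisely (c).

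For (c)$\Rightarrow$(b), regularity puts the pencil into Weierstrass form, so in transformed coordinates $\tilde E=\diag(I_{n_0},N)$ and $\tilde A=\diag(A_0,I)$, with $\tilde\Vs=\R^{n_0}\times\{0\}$ and $\tilde E\tilde\Vs=\R^{n_0}\times\{0\}$ by Lemma~\ref{lem:sys_sp}. Classical Lyapunov theory supplies a symmetric $X_0>0$ with $A_0^TX_0+X_0A_0\leq 0$. Rather than pushing $\tilde X=\diag(X_0,0)$ back through the basis change $S$---which would entangle the bilinear-form transformation $S^{-T}\cdot S^{-1}$ with the operator-style condition $X(E\Vs)=E\Vs$---I would build $X$ intrinsically: take the positive definite symmetric operator on $E\Vs$ obtained by transferring $X_0$ through the linear isomorphism $E\Vs\cong\R^{n_0}$, and extend it by zero on the orthogonal complement $(E\Vs)^\perp$. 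This makes $X>_{E\Vs}0$ and $X(E\Vs)=E\Vs$ automatic, and the Lyapunov inequality transfers because $A\Vs\subseteq E\Vs$ and $E|_\Vs\colon\Vs\to E\Vs$ is a bijection (both visible in Weierstrass form).

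For (b)$\Rightarrow$(c), regularity is already assumed; I would again transform to Weierstrass form and partition $\tilde X$ conformally. Evaluating the Lyapunov inequality on vectors of the form $\tilde v=(v_0,0)\in\tilde\Vs$ causes the off-diagonal and $(2,2)$-blocks of $\tilde X$ to drop out, leaving $A_0^T\tilde X_{11}+\tilde X_{11}A_0\leq 0$ for the $(1,1)$-block $\tilde X_{11}$, while $X>_{E\Vs}0$ gives $\tilde X_{11}>0$. Classical Lyapunov theory then forces $\sigma(A_0)\subseteq\overline{\C_-}$ and, via a Jordan-chain argument, semi-simplicity of eigenvalues on the imaginary axis: a $2\times 2$ Jordan block at $i\mu$ with chain $v,w$ would allow a shift $w+cv$ making $(w+cv)^*(A_0^T\tilde X_{11}+\tilde X_{11}A_0)(w+cv)$ linear and unbounded above in $\re(c)$, contradicting $\leq 0$. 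The main obstacle is the transfer step in (c)$\Rightarrow$(b): since symmetric matrices represent both bilinear forms and self-adjoint operators, and these transform incompatibly under the non-orthogonal $S$, the condition $X(E\Vs)=E\Vs$ does not survive a naive basis change, and $X$ must be constructed directly on $E\Vs$ in the original coordinates.
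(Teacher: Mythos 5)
Your proof is correct and follows essentially the same route as the paper: kill the $\beta$-blocks (and hence, by squareness, the $\gamma$-blocks) of the quasi Kronecker form to get (a)$\Leftrightarrow$(c), and reduce (b)$\Leftrightarrow$(c) to the classical Lyapunov inequality $A_0^TX_1+X_1A_0\leq 0$, $X_1>0$, on the ODE block of the Weierstrass form. The one point where you genuinely depart from the paper is the construction of $X$ in (c)$\Rightarrow$(b): the paper simply asserts that all conditions in (b) are invariant under $[E,A]\mapsto[SET,SAT]$ and then exhibits $X=\begin{smallbmatrix}X_1&0\\0&0\end{smallbmatrix}$ in Kronecker coordinates, whereas you correctly observe that the condition $X(E\Vs^{[E,A]})=E\Vs^{[E,A]}$ does not survive the natural congruence $X\mapsto S^{-T}XS^{-1}$ when $S$ is not orthogonal, and you therefore build $X$ intrinsically: transfer $X_1$ through the isomorphism $E\Vs^{[E,A]}\cong\R^{n_0}$ and extend by zero on $(E\Vs^{[E,A]})^\perp$. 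This works (the Lyapunov inequality transfers because $E\Vs^{[E,A]}$ contains both $Ev$ and $Av$ for $v\in\Vs^{[E,A]}$, and positive definiteness of the restricted form forces bijectivity of $X|_{E\Vs^{[E,A]}}$), and it patches a small gap in the paper's invariance claim; equivalently, one could replace any solution $X$ of \eqref{lyap} with $P_{E\Vs^{[E,A]}}XP_{E\Vs^{[E,A]}}$ to restore $X(E\Vs^{[E,A]})=E\Vs^{[E,A]}$. Apart from this refinement the two arguments coincide.
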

\begin{proof}
Since the above conditions are invariant under multiplication from left and right with invertible matrices, we can assume that $sE-A$ is already given in Kronecker form \eqref{kcf}. If $[E,A]$ is stable then $\beta$ cannot be present in the Kronecker form. Indeed if $\beta_1\geq 1$ then for all $f\in C^\infty(\R,\R)$, there exists a solution $\tilde x$ to
\[\ddt K_{\beta_1}\tilde x(t)=L_{\beta_1}\tilde x(t),\quad \tilde x_{\beta_1}=f,\] 
which contradicts stability. Since $sE-A$ is square also $\gamma$ is not present and therefore $sE-A$ is regular. Furthermore, the stability of $[I_{n_0},A_0]$ shows $\sigma(E,A)\subseteq\overline{\C_-}$ and that all eigenvalues on the imaginary axis are semi-simple. This proves (c). Clearly, (c) implies (a). To prove the equivalence of (b) and (c), we assume that $sE-A$ is regular.
Then Lemma~\ref{lem:sys_sp} implies that 
\begin{align*}
\Vs^{[E,A]}=\R^{n_0}\times \{0\}^{|\alpha|},\quad E\Vs^{[E,A]}=\Vs^{[E,A]}.
\end{align*}
Furthermore, \eqref{lyap} holds if and only if 
\[
\begin{smallbmatrix}
A_0^T&0\\0&I_{|\alpha|}
\end{smallbmatrix}X\begin{smallbmatrix}
I_{n_0}&0\\0&N
\end{smallbmatrix}+\begin{smallbmatrix}
I_{n_0}&0\\0&N^T
\end{smallbmatrix}X\begin{smallbmatrix}
A_0&0\\0&I_{|\alpha|}
\end{smallbmatrix}\leq _{\R^{n_0}\times \{0\}^{|\alpha|}}0
\]
which is, by considering the upper-left block entries, equivalent to the existence of some $X_1\in\R^{n_0\times n_0}$ with $X_1>0$ such that 
\begin{align}
\label{red_lyap}
A_0^TX_1+X_1A_0\leq 0
\end{align}
and hence to $\sigma(A_0)\subseteq\overline{\C_-}$ and semi-simple eigenvalues on the imaginary axis. If $X_1>0$ is a solution of \eqref{red_lyap} then $X\coloneqq\begin{smallbmatrix}
X_1&0\\0&0
\end{smallbmatrix}$ fulfills $XE\Vs^{[E,A]}=E\Vs^{[E,A]}$ and $X_{E\Vs^{[E,A]}}>0$.
\end{proof}
\begin{rem}
Oftentimes \emph{asymptotically stable} DAEs are of interest which are defined by
\[
\forall x\in\mathfrak{B}_{[E,A]}^\infty: \lim\limits_{t\rightarrow\infty}\|x(t)\|=0.
\]
Analogously to Proposition~\ref{prop:fred}, such  DAEs can be characterized by a strict inequality in \eqref{lyap} or by $\sigma(E,A)\subseteq\C_-$. Related characterizations with Lyapunov equations were previously given e.g.\ in \cite{Styk02} and recently in  \cite[Theorem~4]{Mehr21}.
\end{rem}


The proposition below provides a dH formulation \eqref{pH_DAE} of stable DAEs $[E,A]\in\Sigma_n$ on the system space. Here we use the pseudo-inverse $Q^\dagger$ of a matrix $Q$.
\begin{prop}
\label{prop:pH}
Let $[E,A]\in\Sigma_n$ be stable and let $X\in\R^{n\times n}$ be a solution of \eqref{lyap}. Define $Q\coloneqq XE$ and  \[J\coloneqq\tfrac{1}{2}(AQ^{\dagger}-(AQ^\dagger)^T),\ \ R\coloneqq-\tfrac{1}{2}(AQ^\dagger+(AQ^\dagger)^T).\]
Then $[E,(J-R)Q]$ is a dH DAE on $\Vs^{[E,A]}$ in the sense that
\begin{equation*}
    \begin{split}
        J&=_{E\Vs^{[E,A]}}-J^T,\\A&=_{\Vs^{[E,A]}}(J-R)Q,
    \end{split}\quad\quad
    \begin{split}
    R&\geq_{E\Vs^{[E,A]}}0,\\Q^TE&>_{\Vs^{[E,A]}}0.
    \end{split}
\end{equation*}

\end{prop}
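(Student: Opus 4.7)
My plan is to check the four conclusions in turn, leveraging the fact that two of them are essentially automatic from the definitions. The antisymmetry $J=_{E\Vs^{[E,A]}}-J^T$ is immediate, since $J$ is defined as the antisymmetric part of $AQ^\dagger$ and hence $J=-J^T$ globally. For the strict positivity $Q^TE>_{\Vs^{[E,A]}}0$, compute $x^TQ^TEx=(Ex)^TX(Ex)$ for $x\in\Vs^{[E,A]}$; by hypothesis $X>_{E\Vs^{[E,A]}}0$, so the form is nonnegative and vanishes only when $Ex=0$. It therefore suffices that $E$ is injective on $\Vs^{[E,A]}$, which I read off Lemma~\ref{lem:sys_sp} after invoking Proposition~\ref{prop:fred}: stability rules out $\beta$- and $\gamma$-blocks, so in quasi-Kronecker form $\Vs^{[E,A]}=\R^{n_0}\times\{0\}^{|\alpha|}=E\Vs^{[E,A]}$ and $E$ restricts to the identity there.

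The remaining two conditions I would attack by passing to the quasi-Kronecker form, where $E=\diag(I_{n_0},N)$, $A=\diag(A_0,I_{|\alpha|})$, and $\Vs^{[E,A]}=E\Vs^{[E,A]}=\R^{n_0}\times\{0\}^{|\alpha|}$. The invariance condition $X(E\Vs^{[E,A]})=E\Vs^{[E,A]}$ combined with $X=X^T$ forces a block decomposition $X=\diag(X_1,X_{22})$ with $X_1>0$, hence $Q=\diag(X_1,X_{22}N)$ and $\ker Q\subseteq\{0\}\times\R^{|\alpha|}$ is orthogonal to $\Vs^{[E,A]}$. This orthogonality is the crux: it forces the projector $Q^\dagger Q$ to act as the identity on $\Vs^{[E,A]}$, and since $J-R=AQ^\dagger$ by construction, one obtains $(J-R)Qx=AQ^\dagger Qx=Ax$ for all $x\in\Vs^{[E,A]}$, which is the required identity $A=_{\Vs^{[E,A]}}(J-R)Q$.

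For the final inequality $R\geq_{E\Vs^{[E,A]}}0$, I would pick $y\in E\Vs^{[E,A]}$ and exploit bijectivity of $X$ on $E\Vs^{[E,A]}$ to write $y=XEz=Qz$ with $z\in\Vs^{[E,A]}$; by the previous step $Q^\dagger y=Q^\dagger Qz=z$, so
\[
y^TRy=-\tfrac{1}{2}\bigl(y^TAQ^\dagger y+y^T(Q^\dagger)^TA^Ty\bigr)=-y^TAz=-z^TE^TXAz.
\]
Since the scalar $z^TE^TXAz$ equals its transpose $z^TA^TXEz$, it coincides with $\tfrac12 z^T(A^TXE+E^TXA)z$, which is $\leq 0$ by the Lyapunov inequality; thus $y^TRy\geq 0$. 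The main technical obstacle is extracting the orthogonality $\ker Q\perp\Vs^{[E,A]}$, since the Moore--Penrose pseudo-inverse does not transform covariantly under the non-orthogonal change of basis used to reach Kronecker form; this forces the block-structure argument to be carried out with some care, but once the orthogonality is in hand the identities with $Q^\dagger$ reduce to routine manipulation.
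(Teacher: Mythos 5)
Your route coincides with the paper's on three of the four claims: the antisymmetry of $J$ is global by construction, the strict positivity follows from $x^TQ^TEx=(Ex)^TX(Ex)$ together with $X>_{E\Vs^{[E,A]}}0$ and the injectivity of $E$ on $\Vs^{[E,A]}$, and the inequality for $R$ reduces, after writing $y=Qz$ with $z\in\Vs^{[E,A]}$, to the Lyapunov inequality \eqref{lyap} -- exactly the paper's computation $Q^TRQ=-Q^TA-A^TQ=_{\Vs^{[E,A]}}-E^TXA-A^TXE$. Like the paper, you reduce everything else to the single claim that $Q^\dagger Q$ acts as the identity on $\Vs^{[E,A]}$, equivalently that $\ker Q\perp\Vs^{[E,A]}$.

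That is exactly where the gap sits, and you name it but do not close it. Your block-diagonalization of $X$ and the resulting orthogonality are derived in quasi-Kronecker coordinates, whereas $Q^\dagger$ is formed in the original coordinates; a non-orthogonal $T$ destroys the orthogonality, and it genuinely fails. Concretely, take $E=\begin{smallbmatrix}1&1\\0&0\end{smallbmatrix}$, $A=\begin{smallbmatrix}-1&-1\\0&1\end{smallbmatrix}$, so that $[E,A]$ is stable with $\Vs^{[E,A]}=E\Vs^{[E,A]}=\R\times\{0\}$, and $X=\begin{smallbmatrix}1&0\\0&0\end{smallbmatrix}$, which satisfies every hypothesis of Proposition~\ref{prop:fred}(b) (and is the solution produced by its proof). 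Then $Q=XE=\begin{smallbmatrix}1&1\\0&0\end{smallbmatrix}$, $\ker Q$ is spanned by $(1,-1)^T$ and is not orthogonal to $\Vs^{[E,A]}$, $Q^\dagger Q$ is the orthogonal projector onto the span of $(1,1)^T$, and $(J-R)Q\,(1,0)^T=AQ^\dagger Q\,(1,0)^T=(-1,\tfrac12)^T\neq(-1,0)^T=A\,(1,0)^T$, so the identity $A=_{\Vs^{[E,A]}}(J-R)Q$ fails. To be fair, the paper's own proof makes the same unjustified step (it deduces $Q^\dagger Q=_{\Vs^{[E,A]}}E^\dagger E=_{\Vs^{[E,A]}}I$ from $E\Vs^{[E,A]}=\Vs^{[E,A]}$, which does not imply it), so you have reproduced the published argument together with its weak point. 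A correct version needs an extra hypothesis -- e.g.\ that the pencil is already in quasi-Kronecker form, or that $\ker E\perp\Vs^{[E,A]}$ -- or must replace the Moore--Penrose inverse by a generalized inverse adapted to the (generally non-orthogonal) decomposition $\R^n=\Vs^{[E,A]}\oplus\ker E$.
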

\begin{proof}
Note that $sE-A$ is regular by Proposition \ref{prop:fred}. Let $X\in\R^{n\times n}$ be a solution to \eqref{lyap} as given by Proposition \ref{prop:fred}. Since $XE\Vs^ {[E,A]}=E\Vs^{[E,A]}=\Vs^{[E,A]}$ by regularity of $sE-A$ and \eqref{kron_vsys}, 
\[Q^\dagger Q=_{\Vs^{[E,A]}}E^\dagger E=_{\Vs^{[E,A]}}I=_{\Vs^{[E,A]}}EE^\dagger=_{\Vs^{[E,A]}}QQ^\dagger\] 
and thus $(J-R)Q=AQ^\dagger Q=_{\Vs^{[E,A]}}A$.
Since $X>_{E\Vs^{[E,A]}}0$, we have for all $x\in \Vs^{[E,A]}\setminus\{0\}$
\[
x^TQ^TEx=x^TE^TXEx>0.
\]
With $E\Vs^{[E,A]}=Q\Vs^{[E,A]}$ we find that $R\geq_{E\Vs^{[E,A]}}0$ is equivalent to 
\begin{align*}
0\leq_{\Vs^{[E,A]}} Q^TRQ&=-Q^T(AQ^\dagger+(Q^\dagger)^TA^T)Q\\&=-Q^TA-A^TQ\\&=_{\Vs^{[E,A]}}-E^TXA-A^TXE,
\end{align*}
which holds by \eqref{lyap}. Moreover, $J=_{E\Vs^{[E,A]}}J^T$ holds trivially.
\end{proof}
\begin{rem}
\label{rem:index_one}
If $[E,A]\in\Sigma_n$ is stable with index at most one then we can redefine $Q$ in Proposition~\ref{prop:pH} in such a way that the relations in (b) hold on $\R^n$. 
Here we assume for simplicity that $E$ and $A$ are already given in quasi Kronecker form.  Then $E=\begin{smallbmatrix}
I_{n_0}&0\\0&0
\end{smallbmatrix}$ and $A=\begin{smallbmatrix} A_0&0\\0&I_{n-n_0}
\end{smallbmatrix}$. Let $X_0>0$ satisfy  $A_0^TX_0+X_0A_0\leq 0$. Then we define $\hat Q\coloneqq \begin{smallbmatrix} X_0&0\\0&-I_{n-n_0}
\end{smallbmatrix}$ which is invertible and with $J\coloneqq\tfrac{1}{2}(A\hat Q^{-1}-(A\hat Q^{-1})^T)$ and $R\coloneqq-\tfrac{1}{2}(A\hat Q^{-1}+(A\hat Q^{-1})^T)$ we have $\hat Q^TE\geq 0$ and $A=(J-R)\hat Q$. If $sE-A$ has index greater than one then this extension of $Q$ is still possible but leads to $\hat Q^TE\ngeq0$.
\end{rem}



The following example from \cite{MehlMehrWojt18} shows that not every dH DAE given by \eqref{pH_DAE} is stable. Consider
\[
sE-JQ=\begin{smallbmatrix}
s&1\\ 0&s
\end{smallbmatrix},\quad  J\coloneqq \begin{smallbmatrix}0&-1\\ 1&0\end{smallbmatrix},\quad  Q\coloneqq \begin{smallbmatrix}0&0\\ 0&1\end{smallbmatrix},
\]
which is port-Hamiltonian and has a Jordan block of size 2 at zero and is therefore unstable. The above example is an ordinary differential equation and $x\mapsto x^TQ^TEx=x^TQx$ is not a Lyapunov function.

It will be shown below, that an additional assumption which guarantees the stability of dH DAEs is 
\begin{align}
\label{ker_eq}
\ker Q\subseteq\ker E.
\end{align}
An interpretation of this condition is given later in the geometric formulation of DAEs in Section~\ref{sec:geo} where it ensures that the only trajectories of a system with vanishing effort are the purely algebraic solutions.

\begin{prop}
\label{prop:postfred}
Let $[E,(J-R)Q]$ be a dH DAE \eqref{pH_DAE} with $\ker Q\subseteq\ker E$. Then the following holds: 
\begin{itemize}
\item[\rm (a)] If $sE-Q$ is regular then
$Q$ is invertible. 
\item[\rm (b)] Let $Q$ be invertible. Then the dH DAE 
\eqref{pH_DAE} is stable if and only if $\ker J\cap\ker R\cap (Q\ker E)=\{0\}$. 
\end{itemize}
\end{prop}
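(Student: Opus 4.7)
For part (a), if $0 \neq v \in \ker Q$ then the hypothesis $\ker Q \subseteq \ker E$ forces $v \in \ker E$ as well, so $(sE - Q)v = 0$ for every $s \in \C$ and $\det(sE - Q) \equiv 0$. This contradicts regularity of $sE - Q$; hence $\ker Q = \{0\}$ and $Q$ is invertible.

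The plan for (b) is to change variables via $y = Qx$, which is permitted by the invertibility of $Q$. The DAE becomes $\tfrac{\rm d}{{\rm d}t}\tilde E y = (J-R)y$ with $\tilde E \coloneqq E Q^{-1}$, and the compatibility relation $Q^T E = E^T Q \geq 0$ makes $\tilde E$ symmetric positive semidefinite, satisfying $\ker \tilde E = Q \ker E$. Since $sE - (J-R)Q = (s\tilde E - (J-R))Q$ with $Q$ invertible, the two pencils are strictly equivalent, and I may test stability via Proposition~\ref{prop:fred}(c) applied to $s\tilde E - (J-R)$.

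For the necessity direction I argue contrapositively. If $0 \neq y_0 \in \ker J \cap \ker R \cap Q \ker E$, then $y_0 \in \ker(J - R) \cap \ker \tilde E$, so $(s \tilde E - (J-R))y_0 = 0$ for all $s \in \C$; the pencil is singular, which by Proposition~\ref{prop:fred} rules out stability.

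For sufficiency I verify the three conditions of Proposition~\ref{prop:fred}(c) under the assumption that the intersection is trivial. First, no eigenvalue lies in $\C_+$: for $\lambda \tilde E y = (J-R)y$ with $\re \lambda > 0$ and $y \neq 0$, pairing with $y^*$ and taking real parts (using $y^* J y \in i\R$ by skew-symmetry) yields $\re(\lambda)\, y^* \tilde E y + y^* R y = 0$; nonnegativity of the two summands forces $\tilde E y = R y = 0$, and the eigenvalue equation then gives $J y = 0$, placing $y$ in the forbidden intersection. Regularity follows automatically, since a singular pencil would produce eigenvectors for every $\lambda$, in particular in $\C_+$. The main obstacle is semi-simplicity on the imaginary axis. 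Given a genuine Jordan chain $y_1, y_2$ at $\lambda = i\omega$ (so that $\tilde E y_1 \neq 0$), the same energy pairing delivers $R y_1 = 0$, and then the skew-symmetry of $J$ yields the key identity $y_1^* J = i\omega\, y_1^* \tilde E$. Substituting these two identities into $y_1^*$ applied to the chain equation $(\lambda \tilde E - (J-R)) y_2 = \tilde E y_1$ makes the left-hand side collapse to zero, forcing $y_1^* \tilde E y_1 = 0$, and hence $\tilde E y_1 = 0$ by semidefiniteness of $\tilde E$, contradicting $\tilde E y_1 \neq 0$. Stability then follows from Proposition~\ref{prop:fred}.
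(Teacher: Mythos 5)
Your proof is correct. Part (a) and the ``only if'' direction of (b) coincide with the paper's argument: a nonzero vector in $\ker Q\subseteq\ker E$ (resp.\ in $\ker J\cap\ker R\cap Q\ker E$) is a common kernel vector of both pencil coefficients and kills regularity, hence stability. Where you genuinely diverge is the ``if'' direction of (b). The paper also passes to the semi-dissipative Hamiltonian pencil $sEQ^{-1}-(J-R)$, but then it \emph{cites} the structure theorem of Mehl--Mehrmann--Wojtylak (Thm.~2 of \cite{MehlMehrWojt20}) to get the spectral conditions of Proposition~\ref{prop:fred}~(c) for the regular part and the bounds $\beta_i,\gamma_i\le 1$, and only has to use the kernel condition to rule out the singular blocks in the quasi-Kronecker form. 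You instead verify Proposition~\ref{prop:fred}~(c) from scratch: the energy pairing $\re(\lambda)\,y^*\tilde Ey+y^*Ry=0$ excludes eigenvalues in $\C_+$ and, applied to a putative singular pencil, excludes singularity; the Jordan-chain computation at $i\omega$ (using $Ry_1=0$ and $y_1^*J=i\omega\,y_1^*\tilde E$ to collapse $y_1^*(\lambda\tilde E-(J-R))y_2$) forces $\tilde Ey_1=0$ and gives semi-simplicity. Your route is self-contained and elementary, at the cost of redoing the spectral analysis that the cited theorem packages; the paper's is shorter but leans on the external reference. Two minor points you leave implicit but which are harmless: the kernel condition is stated for real subspaces while your eigenvectors are complex (for real matrices the complexified intersection is trivial iff the real one is), and the normalization $\tilde Ey_1\neq 0$ is indeed the correct characterization of a nontrivial Jordan chain at a finite eigenvalue of a pencil.
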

\begin{proof} 
If $sE-Q$ is regular then $\ker E\cap\ker Q=\{0\}$. This together with $\ker Q\subseteq\ker E$ implies $\ker Q=\{0\}$ and hence $Q$ is invertible. This proves (a).

Next, observe that 
\begin{align}
\label{ker_schnitt}
\ker(J-R)=\ker J\cap\ker R. 
\end{align}
If $x\in \ker(J-R)$ then $x^T(J-R)x=0$ and this implies $0=x^T(J^T-R)x=-x^T(J+R)x$. Hence $x\in\ker R$ and therefore $x\in\ker J$. The reverse inclusion is trivial.

To characterize the stability of \eqref{pH_DAE}, we use Proposition~\ref{prop:fred}~(c). 
If \eqref{pH_DAE} is stable then $sE-(J-R)Q$ is regular. Hence, using the invertibility of $Q$ and \eqref{ker_schnitt} we find
\begin{align}
    \label{ker_schnitt_3}
    \{0\}=\ker E\cap\ker (J-R)Q=(Q\ker E)\cap\ker J\cap\ker R.
\end{align} 
Conversely, assume that \eqref{ker_schnitt_3} holds. The pencil $sEQ^{-1}-(J-R)$ is \emph{semi-dissipative Hamiltonian} and according to \cite[Thm.~2]{MehlMehrWojt20} in the quasi-Kronecker form \eqref{kcf} one has $\beta_i,\gamma_i\leq 1$ and the eigenvalue conditions in Proposition~\ref{prop:fred}~(c) are satisfied for the regular part \eqref{kcf_a}. Since 
\[
\{0\}=\ker E\cap\ker (J-R)Q=\ker EQ^{-1}\cap\ker(J-R),
\]
the multi-index $\beta$ cannot be present in the quasi-Kronecker form of $sEQ^{-1}-(J-R)$. Hence $\gamma$ is not present implying that $sE-(J-R)Q$ is regular and therefore stable.
\end{proof}

\begin{rem}
Similar considerations can be done for systems which are stable \emph{backwards in time}, i.e., $[-E,A]$ is stable.
A characterization similar to Proposition~\ref{prop:fred} is straightforward and as a consequence, every DAE which has semi-simple eigenvalues on the imaginary axis can be rewritten on the system space $\Vs^{[E,A]}$ in the form \eqref{pH_DAE} but with $Q^TE=E^TQ$ instead of $Q^TE\geq 0$.
\end{rem}

\section{Stabilizable systems as pH systems}
\label{sec:stabi}
In this section, we consider descriptor systems 
\[
\ddt Ex(t)=Ax(t)+Bu(t),
\]
with $E,A\in\R^{n\times n}$, $B\in\R^{n\times k}$ for $n,k\in\N$ and we write  $[E,A,B]\in\Sigma_{n,k}$. The \emph{behavior} and \emph{system space}  of $[E,A,B]$ are given by 
\begin{multline*}
\mathfrak{B}_{[E,A,B]}^\infty\coloneqq\{(x,u)\in C^\infty(\R,\R^n)\times C^\infty(\R,\R^k) \, \big|\\ \tfrac{{\rm d}}{{\rm d} t}Ex=Ax+Bu\},\\
\Vs^{[E,A,B]}\coloneqq \{(x(0),u(0))~|~ (x,u)\in \mathfrak{B}_{[E,A,B]}^\infty\}\subseteq \R^{n+k}.
\end{multline*}

In the following it is shown that systems
which can be stabilized in behavioral sense can be rewritten as pH systems \eqref{ph_sys} on the system space after introducing a suitable output. 

We say that the system $[E,A,B]\in\Sigma_{n,k}$ is \emph{behaviorally stabilizable} if 
\begin{align}
\label{weaker}
\begin{split}
\forall x_0\in\Vs^{[E,A]}\,\, ~\exists M\geq 0~ \exists  (x,u)\in\mathfrak{B}_{[E,A,B]}^\infty:\\ x(0)=x_0,\quad \sup\limits_{t\geq 0}\|x(t)\|\leq M.
\end{split}
\end{align}
Furthermore, the  class of behavioral stabilizable systems $[E,A,B]\in\Sigma_{n,k}$ for which $sE-A$ is regular and only has semi-simple eigenvalues on the imaginary axis are denoted by $\Sigma_{n,k}^s$.

If $[E,A,B]\in\Sigma_{n,k}^s$ then applying a Jordan decomposition on the first block entry in the quasi Kronecker form \eqref{kcf} we see that  there exists some invertible $S,T\in\R^{n\times n}$, $n_1,n_2\in\N$ with $n_1+n_2=n_0$ such that
\begin{align}
\label{weier_refined}
\begin{split}
&S(sE-A)T=\begin{smallbmatrix}
sI_{n_1}-A_1&0&0\\0&sI_{n_2}-A_2&0\\0&0&sN_\alpha-I_{|\alpha|}
\end{smallbmatrix},\quad x=\begin{smallpmatrix}
x_1\\x_2\\x_3
\end{smallpmatrix} \\ &SB=\begin{smallbmatrix}
B_1\\B_2\\B_\alpha
\end{smallbmatrix} ,\quad \sigma(A_1)\subseteq\C_+,\quad [I_{n_2},A_2]~\text{stable.} 
\end{split}
\end{align}
Furthermore, $[I_{n_1},A_1,B_1]$ is controllable.
\begin{lem}
\label{lem:diag}
If $[E,A,B]\in\Sigma_{n,k}^s$ fulfills \eqref{weier_refined} with $S=T=I_n$ then a stabilizing feedback is given by $u(t)=-B_1^TP_1x_1(t)$ and $P_1>0$ is the unique solution of 
    \begin{align}
\label{ABE}
    A_1^TP_1+P_1A_1=P_1B_1B_1^TP_1.
    \end{align}
\end{lem}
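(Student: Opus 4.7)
The strategy is to use the block decomposition \eqref{weier_refined}: the second subsystem $[I_{n_2},A_2]$ is already stable by assumption, and the algebraic subsystem $sN_\alpha-I_{|\alpha|}$ contributes no finite eigenvalues, so only the controllable antistable piece $[I_{n_1},A_1,B_1]$ has to be actively handled.

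For the Bernoulli equation \eqref{ABE}, I would first exhibit a positive definite solution via the substitution $X_1\coloneqq P_1^{-1}$, which formally converts \eqref{ABE} into the standard Lyapunov equation $A_1 X_1+X_1 A_1^T=B_1 B_1^T$. Since $\sigma(A_1)\subseteq\C_+$, the integral
\[
X_1\coloneqq\int_0^\infty e^{-A_1 t}B_1 B_1^T e^{-A_1^T t}\,{\rm d}t
\]
converges and is the unique solution of this Lyapunov equation; controllability of $(A_1,B_1)$ yields $X_1>0$. Setting $P_1\coloneqq X_1^{-1}>0$ and reversing the substitution provides the desired unique positive definite solution of \eqref{ABE}. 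To show that $\tilde A_1\coloneqq A_1-B_1 B_1^T P_1$ is Hurwitz, I would rely on the identity
\[
P_1\tilde A_1+\tilde A_1^T P_1=-P_1 B_1 B_1^T P_1\leq 0,
\]
which together with $P_1>0$ gives $\sigma(\tilde A_1)\subseteq\overline{\C_-}$; any eigenvector $v$ with purely imaginary eigenvalue $\lambda$ would force $B_1^TP_1 v=0$, hence $A_1 v=\tilde A_1 v=\lambda v$, contradicting $\sigma(A_1)\subseteq\C_+$.

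Plugging the feedback $u=-B_1^T P_1 x_1$ into the full system produces a closed-loop pencil $sE-(A+BF)$ that is block lower triangular with diagonal blocks $sI_{n_1}-\tilde A_1$, $sI_{n_2}-A_2$, and $sN_\alpha-I_{|\alpha|}$. This pencil is regular, its finite spectrum equals $\sigma(\tilde A_1)\cup\sigma(A_2)\subseteq\overline{\C_-}$, and its purely imaginary eigenvalues lie only in $\sigma(A_2)$. For such an eigenvalue $\lambda$, the invertibility of $\lambda I_{n_1}-\tilde A_1$ and of $\lambda N_\alpha-I_{|\alpha|}$ forces any kernel vector of $\lambda E-(A+BF)$ to have its first and third components zero, so the eigenspace decouples to $\ker(\lambda I_{n_2}-A_2)$, and semi-simplicity lifts from $A_2$ to the full pencil. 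Proposition~\ref{prop:fred}(c) then yields behavioral stability of the closed loop. I expect the main obstacle to be precisely this last semi-simplicity verification, because block lower triangular pencils can in principle have Jordan chains crossing the diagonal blocks; the crucial observation is that the strict separation $\sigma(\tilde A_1)\subseteq\C_-$ together with the emptiness of the algebraic block's finite spectrum confines the imaginary eigenspaces to the $A_2$-block.
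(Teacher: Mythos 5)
Your proof is correct, and its first half coincides with the paper's: the paper also passes from \eqref{ABE} to the Lyapunov equation via $X_1=P_1^{-1}$ and obtains existence, uniqueness and positivity from controllability of $(A_1,B_1)$ together with $\sigma(-A_1^T)\subseteq\C_-$ (citing a standard theorem rather than writing the integral). You diverge in two places. First, to show $\tilde A_1=A_1-B_1B_1^TP_1$ is Hurwitz, the paper invokes the same controllability--Lyapunov theorem a second time for the closed-loop pair, whereas you use the identity $P_1\tilde A_1+\tilde A_1^TP_1=-P_1B_1B_1^TP_1\le 0$ plus the eigenvector argument ruling out imaginary eigenvalues; your version is more self-contained and equally valid. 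Second, for boundedness of the full state, the paper argues by hand: exponential decay of $u$ and all its derivatives, variation of constants for $x_2$, and back-substitution row by row through the nilpotent block to show $x_\alpha\to 0$. You instead form the closed-loop pencil $sE-(A+BF)$ and appeal to Proposition~\ref{prop:fred}(c); your semi-simplicity argument for imaginary eigenvalues of the block triangular pencil is sound (since $\lambda\notin\sigma(\tilde A_1)$ and $\lambda N_\alpha-I$ is invertible, both geometric and algebraic multiplicities of $\lambda$ are inherited from $A_2$). This route is cleaner in that it reuses machinery already established, but note one point both arguments leave implicit: with the feedback in place the algebraic components are slaved to $x_1$, so the closed-loop trajectory through a given $(x_1(0),x_2(0))$ has $x_\alpha(0)$ determined by $x_1$ and generally nonzero, i.e.\ $\Vs^{[E,A+BF]}$ is a graph over $\R^{n_0}$ rather than $\R^{n_0}\times\{0\}^{|\alpha|}$. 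Exactly matching $x(0)=x_0\in\Vs^{[E,A]}$ as demanded in \eqref{weaker} therefore needs an extra word (or a mild reading of the definition); since the paper's own proof passes over the same point, this is not a defect specific to your argument.
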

\begin{proof}
Clearly, \eqref{ABE} has a unique positive definite solution if and only if 
\[
    (-A_1^T)^TP_1^{-1}+P_1^{-1}(-A_1^T)=-B_1B_1^T
\]

has a unique positive definite solution. Since $[I_{n_1},A_1,B_1]$ is controllable and $\sigma(-A_1^T)\subset\C_-$ such a solution exists by \cite[Thm.~3.28]{TrenStooHaut01}. It remains to show that $u(t)=-B_1^TP_1x_1(t)$ is a stabilizing feedback, i.e., that \eqref{weaker} holds. Note that $[I_{n_1},A_1-B_1B_1^ TP_1,B_1]$ is also controllable and that \eqref{ABE} is also equivalent to
\[
    (A_1^T-P_1^TB_1B_1^T)^TP_1^{-1}+P_1^{-1}(A_1^T-P_1^TB_1B_1^T)=-B_1B_1^T.
\]
Now invoking \cite[Thm.~3.28]{TrenStooHaut01} again shows $\sigma(A_1-B_1B_1^TP_1)\subseteq\C_-$. Moreover, with
\[
x_1(t)=e^{(A_1-B_1B_1^TP_1)t}x_1(0),\quad \forall t\geq 0,
\]
there exist $M,\beta>0$ such that for all $k\geq 0$ which are smaller than the index of $[E,A]$
\begin{align*}
\|x_1^{(k)}(t)\|\leq Me^{-\beta t},\quad \forall t\geq 0.
\end{align*}
Therefore, we have for some $\hat M>0$ such that for all $k\geq 0$ which are smaller then the index of $[E,A]$ 
\begin{align}
\label{all_u_estim}
\|u^{(k)}(t)\|\leq \hat Me^{-\beta t},\quad \forall t\geq 0.
\end{align}
Since $[I_{n_2},A_2]$ is stable, the variation of constants formula implies that the solution $x_2$ of $\dot x_2=A_2x_2(t)+B_2u(t)$ are bounded.  Next, we only consider the first block-entry (if any) of $sN_\alpha-I_{|\alpha|}$, $sN_{\alpha_1}-I_{\alpha_1}$, since the others are treated analogously. With $B_\alpha=\begin{smallbmatrix}
B_{\alpha_1}^T&\cdots&B^T_{\alpha_{\ell(\alpha)}}
\end{smallbmatrix}^T$ the solution $x_{\alpha_1}(t)=(x_{{\alpha_1},1}(t),\ldots,x_{{\alpha_1},{\alpha_1}}(t))^T$  of $[N_{\alpha_1},I_{\alpha_1}]$ fulfills 
\[
\begin{smallpmatrix}
\dot x_{{\alpha_1},2}\\ \vdots\\ \dot x_{{\alpha_1},{\alpha_1}}\\ 0
\end{smallpmatrix}=\ddt N_{\alpha_1}\begin{smallpmatrix}
x_{{\alpha_1},1}\\ \vdots\\ x_{{\alpha_1},{\alpha_1}-1}\\ x_{{\alpha_1},n_{\alpha_1}}
\end{smallpmatrix}=\begin{smallpmatrix}
x_{{\alpha_1},1}\\ \vdots\\ x_{{\alpha_1},{\alpha_1}-1}\\ x_{{\alpha_1},n_{\alpha_1}}
\end{smallpmatrix}+B_{\alpha_1}u
\]
and inspecting the last row leads to
\[
\|x_{{\alpha_1},{\alpha_1}}(t)\|=\|{\alpha_1}u(t)\|\leq \|B_{\alpha_1}\|\hat Me^{-\beta t},
\]
which tends to zero as $t\rightarrow\infty$. Similarly, the  penultimate row leads to
\begin{align*}
x_{{\alpha_1},{\alpha_1}-1}(t)&=\dot x_{{\alpha_1},{\alpha_1}}(t)-e_{{\alpha_1}-1}^TB_{\alpha_1}u(t)\\
&=B_{\alpha_1}\dot u(t)-e_{{\alpha_1}-1}^TB_{\alpha_1}u(t)
,\end{align*}
which is again exponentially bounded  by \eqref{all_u_estim}.
Repeating the last step with the remaining rows starting with the ${\alpha_1}-2$th row shows that $\|x_{\alpha_1}(t)\|\rightarrow 0$ as $t\rightarrow\infty$ which completes the proof that $u$ stabilizes the solution of $[E,A,B]$. 
\end{proof}

Below, we obtain another characterization of stabilizability using solutions to certain matrix equalities on the system space which allows us to reformulate the system in a port-Hamiltonian way.
Similar equations for stabilization of DAEs have been studied  under the name \emph{generalized algebraic Bernoulli equation} in \cite{BarrBennQuin07}.

\begin{lem}\label{prop:GIRLS-reduced}
Let $[E,A,B]\in\Sigma_{n,k}^s$. Then there exists some $X_1,X_2\geq_{E\Vs^{[E,A]}}0$ such that the following holds.
\begin{align}
        &A^TX_1E+E^TX_1A=_{\Vs^{[E,A]}}E^TX_1BB^TX_1E,\label{GABd1}\\
        &A^TX_2E+E^TX_2A\leq_{\Vs^{[E,A]}}0,\label{GABd2}
        \end{align}
with $(X_2\pm X_1)E\Vs^{[E,A]}=E\Vs^{[E,A]}$ and $X_1+X_2>_{E\Vs^{[E,A]}}0$.
\end{lem}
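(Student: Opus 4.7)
The plan is to reduce via coordinate changes to the refined quasi-Kronecker form \eqref{weier_refined} and then take $X_1$ and $X_2$ as block-diagonal matrices supported on the antistable and stable regular blocks, respectively.

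First I would verify that every assertion in the statement is invariant under the action $[E,A,B]\mapsto[SET,SAT,SB]$ paired with $X\mapsto S^{-T}XS^{-1}$: by \eqref{EA_inv} the system space and $E\Vs^{[E,A]}$ transform compatibly, and each of the three kinds of conditions (the equation \eqref{GABd1}, the quadratic inequality \eqref{GABd2}, and the subspace invariance $(X_2\pm X_1)E\Vs^{[E,A]}=E\Vs^{[E,A]}$) pulls back correctly. Thus I may assume the system is already in the form \eqref{weier_refined} with $S=T=I_n$, so that by Lemma~\ref{lem:sys_sp} one has $\Vs^{[E,A]}=\R^{n_1+n_2}\times\{0\}^{|\alpha|}=E\Vs^{[E,A]}$.

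Next I would obtain $P_1>0$ from Lemma~\ref{lem:diag} satisfying $A_1^TP_1+P_1A_1=P_1B_1B_1^TP_1$ and set $X_1\coloneqq\diag(P_1,0,0)$; stability of $[I_{n_2},A_2]$ together with Proposition~\ref{prop:fred}(b) yields $P_2>0$ with $A_2^TP_2+P_2A_2\leq0$, and I set $X_2\coloneqq\diag(0,P_2,0)$. A direct block-wise computation then gives $A^TX_1E+E^TX_1A=\diag(A_1^TP_1+P_1A_1,0,0)=\diag(P_1B_1B_1^TP_1,0,0)=E^TX_1BB^TX_1E$ and $A^TX_2E+E^TX_2A=\diag(0,A_2^TP_2+P_2A_2,0)\leq 0$, confirming \eqref{GABd1}--\eqref{GABd2} on $\Vs^{[E,A]}$.

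Finally, $X_2\pm X_1=\diag(\pm P_1,P_2,0)$ maps $E\Vs^{[E,A]}=\R^{n_1+n_2}\times\{0\}^{|\alpha|}$ into itself and its restriction is invertible since $P_1,P_2>0$, giving $(X_2\pm X_1)E\Vs^{[E,A]}=E\Vs^{[E,A]}$; positive definiteness of $X_1+X_2$ on $E\Vs^{[E,A]}$ is then immediate. The main obstacle I anticipate is the invariance bookkeeping at the start: although the Lyapunov inequality transforms by the familiar $X\mapsto S^{-T}XS^{-1}$ rule, the quadratic Bernoulli term $E^TXBB^TXE$ must be checked separately, and the subspace condition $X(E\Vs)\subseteq E\Vs$ requires tracking how $E\Vs$ transforms. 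Once these covariance checks are in place, the block-diagonal structure reduces everything to standard scalar Lyapunov and Bernoulli statements for the two regular blocks $A_1$ and $A_2$.
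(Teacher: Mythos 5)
Your proof is correct and follows essentially the same route as the paper: reduce to the refined quasi-Kronecker form \eqref{weier_refined} by invariance, then take $X_1=\diag(P_1,0,0)$ from the Bernoulli equation \eqref{ABE} and $X_2$ supported on the stable block from a Lyapunov inequality for $A_2$. The only (harmless) difference is that you place $P_2$ itself in $X_2$, whereas the paper writes $P_2^{-1}$; your choice is the one that verifies \eqref{GABd2} directly from $A_2^TP_2+P_2A_2\leq 0$.
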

\begin{proof}
All conditions are invariant under transformations of the form $[E,A,B]\rightarrow[SET,SAT,SB]$ for all invertible $S,T\in\R^{n\times n}$. Hence we can assume without restriction that $[E,A,B]$ is already given in the block diagonal form on the right-hand side of \eqref{weier_refined}. Introduce $\tilde E=\diag(I_{n_2},N)$, $\tilde A=\diag(A_2,I_{n_3})$ and then we set $X_1\coloneqq \diag(P_1,0_{n_2+n_3})$, where $P_1>0$ is a solution of \eqref{ABE} and $X_2\coloneqq \diag(0_{n_1},P_2^{-1},0_{n_3})$ where $P_2>0$ is a solution of the Lyapunov inequality $ A_2^TP_2+P_2A_2\leq0$. Clearly, $X_1,X_2\geq_{E\Vs^{[E,A]}}0$ and they satisfy \eqref{GABd1} and \eqref{GABd2}, respectively. 
Furthermore,  $E\Vs^{[E,A]}=\R^{n_1+n_2}\times \{0\}^{n_3}$ and hence $X_2\pm X_1=\diag(\pm P_1,P_2^{-1},0_{n_3})$ map $E\Vs^{[E,A]}$ into itself with $X_1+X_2>_{E\Vs^{[E,A]}}0$.
\end{proof}

Based on this result, we show how to interpret a stabilizable system as a port-Hamiltonian system \eqref{ph_sys} which can be viewed as an analogue to Proposition~\ref{prop:pH}. 

\begin{prop}\label{cor:feedback-pH}
Let $[E,A,B]\in\Sigma_{n,k}^s$. Then there exist $X_1,X_2\geq_{E\Vs^{[E,A]}}0$ such that with the choices of
    \begin{equation}\label{eq:choice}
        \begin{aligned}
        Q&\coloneqq(X_2-X_1)E,\\
        J&\coloneqq\tfrac{1}{2}(AQ^{\dagger}-(AQ^{\dagger})^T),\\
        R&\coloneqq-\tfrac{1}{2}(AQ^{\dagger}+(AQ^{\dagger})^T),
        \end{aligned}
    \end{equation}
the system
\begin{equation}\label{eq:feedback-pH}
\begin{aligned}
    \tfrac{\rm d}{{\rm d} t}  Ex(t)&=(J-R)Qx(t)+Bu(t),\\
    y(t)&=B^TQx(t)+u(t),
\end{aligned}
\end{equation}
is port-Hamiltonian on $\Vs^{[E,A]}\times\R^k$ in the sense that
\begin{equation}\label{eq:pH-Vsys}
\begin{aligned}
\begin{smallbmatrix}
J&B\\-B^T&0
\end{smallbmatrix}&=_{E\Vs^{[E,A]}\times\R^k}-\begin{smallbmatrix}
J&B\\-B^T&0
\end{smallbmatrix}^T,\\
\begin{smallbmatrix}
R&0\\0&I
\end{smallbmatrix}&\geq_{E\Vs^{[E,A]}\times\R^k}0,\\
E^TQ&=_{\Vs^{[E,A]}}Q^TE,\quad A=_{\Vs^{[E,A]}}(J-R)Q.
\end{aligned}
\end{equation}
\end{prop}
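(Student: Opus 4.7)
The plan is to invoke Lemma~\ref{prop:GIRLS-reduced} to obtain $X_1, X_2$, set $Q, J, R$ according to \eqref{eq:choice}, and then verify each of the four conditions in \eqref{eq:pH-Vsys}. Three of them are essentially automatic. The identity $Q^TE = E^TQ$ reduces to $E^T\bigl((X_2-X_1)^T - (X_2-X_1)\bigr)E = 0$, which holds because $X_1, X_2$ are symmetric (the notation $\geq 0$ in the lemma presupposes symmetry). Skew-symmetry of $\begin{smallbmatrix}J & B \\ -B^T & 0\end{smallbmatrix}$ is built into the definition of $J$ as the skew part of $AQ^\dagger$, together with the manifestly antisymmetric off-diagonal blocks. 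The identity block of $\begin{smallbmatrix}R & 0 \\ 0 & I\end{smallbmatrix}$ is trivially positive, so the substantive conditions are $A =_{\Vs^{[E,A]}} (J-R)Q$ and $R \geq_{E\Vs^{[E,A]}} 0$.

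For the algebraic identity I would first rewrite $(J-R)Q = A Q^\dagger Q$ and reduce the problem to showing $Q^\dagger Q x = x$ on $\Vs^{[E,A]}$, equivalently $\Vs^{[E,A]} \perp \ker Q$. To verify this I would pass to the refined Kronecker form \eqref{weier_refined}, where the $X_1, X_2$ constructed in the proof of Lemma~\ref{prop:GIRLS-reduced} are block-diagonal; the resulting $Q = \diag(-X_1^{(1)}, X_2^{(2)}, 0_{|\alpha|})$ has $\ker Q = \{0\}^{n_0} \times \R^{|\alpha|}$, which is the orthogonal complement of $\Vs^{[E,A]} = \R^{n_0} \times \{0\}^{|\alpha|}$. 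For $R \geq_{E\Vs^{[E,A]}} 0$, I would use $Q\Vs^{[E,A]} = (X_2 - X_1)E\Vs^{[E,A]} = E\Vs^{[E,A]}$ together with the injectivity of $Q|_{\Vs^{[E,A]}}$ (a consequence of $\ker E \cap \Vs^{[E,A]} = \{0\}$ for regular pencils) to write every $y \in E\Vs^{[E,A]}$ as $y = Qx$ with $x \in \Vs^{[E,A]}$. Using $Q^\dagger Q x = x$, this gives $y^T R y = -\tfrac{1}{2} x^T(Q^T A + A^T Q) x$, and substituting $Q = (X_2 - X_1)E$ splits the right-hand side into the $X_2$-Lyapunov piece (non-positive by \eqref{GABd2}) minus the $X_1$-Bernoulli piece, which by \eqref{GABd1} equals $E^T X_1 B B^T X_1 E$; the net estimate is $2 y^T R y \geq \|B^T X_1 E x\|^2 \geq 0$.

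The most delicate point is that the Moore--Penrose pseudo-inverse in \eqref{eq:choice} is not covariant under the $(S,T)$-reduction to Kronecker form, so the formulae for $J$ and $R$ cannot simply be transported between coordinate systems by conjugation. I would sidestep this by committing to the block-diagonal $X_1, X_2$ constructed in the Lemma's proof, and observing that the only property of $Q^\dagger$ actually required for the verification---namely $Q^\dagger Q x = x$ on $\Vs^{[E,A]}$---is equivalent to the coordinate-free orthogonality $\Vs^{[E,A]} \perp \ker Q$, which can therefore be checked in Kronecker coordinates and used unchanged in the original ones.
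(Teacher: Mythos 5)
Your proposal is correct and follows essentially the same route as the paper: invoke Lemma~\ref{prop:GIRLS-reduced}, use $Q^\dagger Q=I$ on $\Vs^{[E,A]}$ to get $A=_{\Vs^{[E,A]}}(J-R)Q$, and verify $R\geq_{E\Vs^{[E,A]}}0$ by the computation $2Q^TRQ=-(Q^TA+A^TQ)\geq_{\Vs^{[E,A]}}E^TX_1BB^TX_1E\geq 0$ combining \eqref{GABd1} and \eqref{GABd2}, exactly as in the paper's (terser) argument modeled on Proposition~\ref{prop:pH}. The only difference is your explicit discussion of the non-covariance of the Moore--Penrose pseudo-inverse under the $(S,T)$-reduction and the reduction of what is needed to $\Vs^{[E,A]}\perp\ker Q$; the paper leaves this point implicit, so your added care is a refinement rather than a change of method.
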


\begin{proof}
The proof is completely analogous to the proof of Proposition~\ref{prop:pH} except that we use \eqref{GABd1} and \eqref{GABd2} instead of \eqref{lyap} to prove the inequality in \eqref{eq:pH-Vsys}. To be more precise, let $X_1,X_2$ be the solutions of \eqref{GABd1} and \eqref{GABd2}, respectively, given by Lemma \ref{prop:GIRLS-reduced}. Then with $(X_1-X_2)E\Vs^{[E,A]}=E\Vs^{[E,A]}=\Vs^{[E,A]}$ we have 
\[R\geq_{E\Vs^{[E,A]}}0\Leftrightarrow 2Q^TRQ\geq_{\Vs^{[E,A]}}0\]
and
\begin{align*}
    2Q^TRQ&=_{\Vs^{[E,A]}}-Q^TA-A^TQ\\&=-E^T(X_2-X_1)A-A^T(X_2-X_1)E\\
    &\geq_{\Vs^{[E,A]}}E^TX_1BB^TX_1E\geq_{\Vs^{[E,A]}}0.
\end{align*}
\end{proof}

\begin{rem}
In the context of Proposition~\ref{cor:feedback-pH}, one can show that the solutions $(x,u)$ of  $[E,A-BB^T(X_2-X_1)E]$ correspond to the solutions $(x,u,y)$ of \eqref{eq:feedback-pH} when imposing $y=0$. This restriction corresponds to an \emph{interconnection} (cf. \cite{CervvdScBano07}) with respect to the \emph{Dirac structure} $\ran\begin{smallbmatrix}
0\\I
\end{smallbmatrix}$ which are key objects in the geometric formulation of pH systems (cf.\ Section~\ref{sec:geo}).
Furthermore, if $sE-A$ has index one then $N=0$ in \eqref{weier_refined} and $E\Vs^{[E,A]}\times\R^k$ coincides with $\begin{smallbmatrix}
E&0\\0&I_k
\end{smallbmatrix}\Vs^{[E,A,B]}$. If $E$ is invertible then $\Vs^{[E,A,B]}=\R^{n+k}$,  $E\Vs^{[E,A]}=\R^n$ and $Q=X_1+X_2>0$.
\end{rem}
\begin{rem}
In Proposition~\ref{cor:feedback-pH} we defined a suitable output to obtain a pH system. More generally, in \cite[Thm.~3.6]{GillShar18} it was shown that descriptor systems $[E,A,B]$ with output $y(t)=Cx(t)+Du(t)$ can be rewritten as a pH system of the form \eqref{ph_sys} if there exists an invertible solution $X$ of a linear matrix inequality (typically referred to as Kalman-Yakubovich-Popov inequality). Moreover, one has $X=Q$ in \eqref{ph_sys}. 
Such invertible solutions can be obtained by restricting the system \eqref{eq:feedback-pH} to the space $\Vs^{[E,A]}\times \R^k$. In this case $Q$ in \eqref{eq:choice} can be replaced by the invertible $\hat Q\coloneqq (X_2-X_1)E|_{\Vs^{[E,A]}}$.
\end{rem}

\section{Stability of geometric dH DAEs}
\label{sec:geo}
In this section we study the stability of DAEs which are given by the geometrical formulation used in \cite{MascvdSc18}.

In comparison to \eqref{ph_sys}, the equations here are given implicitly. To this end, let $\mathcal{L}$ and $\mathcal{D}$ be subspaces of $\R^{n}\times \R^{n}$ with \emph{range representations} $\Lc=\ran\begin{smallbmatrix}
L_1\\L_2
\end{smallbmatrix}$ and $\Dc=\ran\begin{smallbmatrix}
D_1\\D_2
\end{smallbmatrix}$ for some  $L_1,L_2,D_1,D_2\in\R^{n\times n}$. Then $\Lc$ is \emph{Lagrangian} and $\Dc$ is \emph{maximally dissipative} if
\begin{align}
\label{lag_dissip}
L_1^TL_2-L_2^TL_1&=0,& &\dim\Lc=n, \quad\text{and}\\ D_2^TD_1+D_1^TD_2&\leq 0,&&\dim\Dc=n. \label{nur_dissip}
\end{align}
Furthermore, the matrices $L_1,L_2,D_1,D_2$ can be chosen in such a way that $L_2=L_2^T$ and $D_2+D_2^T\leq 0$ holds. These $\mathcal{D}$ and $\mathcal{L}$ can be used to implicitly define a DAE by demanding that the system trajectories $z,e:\R\rightarrow\R^n$ satisfy 
\begin{align}
\label{pH_geo}
(e(t),-\tfrac{d}{dt}z(t))\in\mathcal{D}, \quad (z(t),e(t))\in\mathcal{L}.
\end{align}
These systems were called \emph{generalized pH DAE systems} in \cite{MascvdSc18} and $z$ and $e$ are called the \emph{state} and \emph{effort}, respectively. Therein, $\Dc$ was assumed to be a so-called \emph{Dirac structure}, i.e.,\ \eqref{nur_dissip} holds with equality. In  \cite{GernHall21}, $\Dc$ in \eqref{pH_geo} is allowed to be dissipative and we include the port and the resistive variables already in the state for simplicity. 

The DAE is then explicitly given by the range representation of the following subspace
\begin{align}
\Dc\Lc&\coloneqq \{(x,z) ~|~ \exists\,e\in\R^n:~(x,e)\in\Lc, (e,z)\in \Dc\}\nonumber \\&~=\ran\begin{bmatrix}
E\\A
\end{bmatrix} \label{def_pencil}
\end{align}
for some $E,A\in\R^{n\times n}$. Moreover, the functions which fulfill $\tfrac{d}{dt}Ex(t)=Ax(t)$ and \eqref{pH_geo} with $z(t)=Ex(t)$ coincide, see \cite[Section~4]{GernHall21}.


To ensure stability of $[E,A]\in\Sigma_n$ given by \eqref{def_pencil}, we need the additional assumption that the Lagrangian subspace $\Lc=\begin{smallbmatrix}
L_1\\ L_2
\end{smallbmatrix}$ is \emph{nonnegative}, i.e., $L_1^TL_2\geq 0$  and, using a suitable CS-decomposition, see \cite[Prop.~3.1]{MehlMehrWojt18}, we can choose $L_1$ and $L_2$ in such a way that $L_1\geq 0$. 

As a main result of this section we provide sufficient conditions for the stability of DAEs given by \eqref{def_pencil}. Here we use the orthogonal projector $P_{\mathcal{M}}$ onto a subspace $\mathcal{M}\subseteq\R^n$. 
The result is an immediate consequence of \cite[Prop.~5.1, Prop.~6.3]{GernHall21}.
\begin{prop}
\label{prop:stab_geo}
Let $\Dc=\ran\begin{smallbmatrix} D_1\\ D_2
\end{smallbmatrix}$ be maximally dissipative and $\Lc=\ran\begin{smallbmatrix}
L_1\\ L_2
\end{smallbmatrix}$ nonnegative Lagrangian with $L_1,L_1^TL_2\geq 0$ and $D_2+D_2^T\leq 0$. Further, let $sE-A$ be the pencil given by \eqref{def_pencil}. Then with $\mathcal X\coloneqq \ran D_1\cap\ran L_2$  the following holds.  
\begin{itemize}
    \item[\rm (a)] $sE-A$ is regular if and only if 
    \begin{align*}
 \ker P_{\mathcal X}L_1|_{\mathcal X}\cap\ker P_{\mathcal X}D_{2}|_{\mathcal X}=\{0\}, \\   
D_2(\ker D_1)\cap L_1(\ker L_2)=\{0\}.
\end{align*}
    \item[\rm (b)] $[E,A]$ is stable if additionally $L_1(\ker L_2)=\{0\}$ holds.
\end{itemize}
\end{prop}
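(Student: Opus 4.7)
The plan is to reduce both assertions to the two cited results \cite[Prop.~5.1 and Prop.~6.3]{GernHall21}, combined with the intrinsic characterization of stability already established in Proposition~\ref{prop:fred} of the present paper. Since all hypotheses only concern the subspaces $\Lc$ and $\Dc$ rather than particular matrices, I would first normalize the range representations so that $L_2=L_2^T$, $L_1\geq 0$ (using the CS-decomposition referenced just before the proposition) and $D_2+D_2^T\leq 0$. This is precisely the frame in which \cite{GernHall21} operates, so no genuine loss of generality occurs.

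For part (a) I would invoke \cite[Prop.~5.1]{GernHall21} directly. That result identifies regularity of $sE-A$ with the vanishing of two obstruction spaces: one living inside $\mathcal{X}=\ran D_1\cap\ran L_2$ and controlled by the restrictions $P_{\mathcal{X}}L_1|_{\mathcal{X}}$ and $P_{\mathcal{X}}D_2|_{\mathcal{X}}$, the other described by the image condition $D_2(\ker D_1)\cap L_1(\ker L_2)=\{0\}$. These obstructions coincide verbatim with the two displayed conditions in~(a), so the equivalence is a direct restatement of that lemma.

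For part (b) I would proceed as follows. The extra hypothesis $L_1(\ker L_2)=\{0\}$ makes the second condition of (a) trivial, so together with the first it yields regularity by part (a). For stability it remains, by Proposition~\ref{prop:fred}(c), to place $\sigma(E,A)$ inside $\overline{\C_-}$ and to verify semi-simplicity of the eigenvalues on the imaginary axis. The spectral location follows from the dissipativity $D_2+D_2^T\leq 0$ combined with nonnegativity of $\Lc$: the associated Hamiltonian furnishes a nonincreasing quadratic form along trajectories, forcing $\sigma(E,A)\subseteq\overline{\C_-}$. Semi-simplicity on the imaginary axis is exactly what \cite[Prop.~6.3]{GernHall21} guarantees once $L_1(\ker L_2)=\{0\}$ is assumed; this is where the additional hypothesis is essential, since without it one can exhibit pH examples with a Jordan block at zero, as the $2\times 2$ pencil in Section~\ref{sec:stab} shows.

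The main obstacle is bookkeeping rather than mathematics: one must check that the projector $P_{\mathcal{X}}$, the intersection $\ran D_1\cap\ran L_2$, and the image spaces $D_2(\ker D_1)$ and $L_1(\ker L_2)$ are defined in \cite[Prop.~5.1, Prop.~6.3]{GernHall21} in exactly the same way as they appear in the statement here, so that the appeal to those propositions is literal. Once this identification is made transparent, the proof of Proposition~\ref{prop:stab_geo} amounts to quoting the two external results and combining them with Proposition~\ref{prop:fred}.
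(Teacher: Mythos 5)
Your proposal is correct and takes essentially the same route as the paper: the paper gives no independent proof, stating only that the proposition is an immediate consequence of \cite[Prop.~5.1, Prop.~6.3]{GernHall21}, which is exactly the citation-based reduction you carry out (with some extra, sound detail on how part (b) combines the normalization $L_1\geq 0$, $L_2=L_2^T$ with Proposition~\ref{prop:fred}(c)).
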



DH DAEs $sE-(J-R)Q$ can be written in the form \eqref{def_pencil} using $\Dc={\rm gr\,} (J-R)\coloneqq \{(x,(J-R)x) ~|~x\in\R^n\}$ and $\Lc=\ran\begin{smallbmatrix}
E\\Q
\end{smallbmatrix}$. Here, $D_1=I_n$ and $D_2=J-R$ and hence the second condition in Proposition~\ref{prop:stab_geo}~(a) trivially holds.
\begin{corollary}
\label{cor:stable}
Let $[E,(J-R)Q]\in\Sigma_n$ be a dH DAE \eqref{ph_DAE_i} then it is regular if and only if $\ker E\cap\ker(Q^TJQ)\cap\ker(Q^TRQ)=\{0\}$. Furthermore, $[E,(J-R)Q]$ is stable if additionally $\ker Q\subseteq \ker E$.
\end{corollary}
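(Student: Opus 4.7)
My plan is to specialize Proposition~\ref{prop:stab_geo} to the range representations $\Dc = \mathrm{gr}(J-R) = \ran\begin{smallbmatrix}I_n\\J-R\end{smallbmatrix}$ and $\Lc = \ran\begin{smallbmatrix}E\\Q\end{smallbmatrix}$ identified just before the corollary, so $D_1 = I_n$, $D_2 = J-R$, $L_1 = E$, $L_2 = Q$. The standing dH conditions~\eqref{eq:JRQE} yield $D_2 + D_2^T = -2R \leq 0$ (maximal dissipativity of $\Dc$) and $L_1^T L_2 = Q^T E \geq 0$ (nonnegative Lagrangian property of $\Lc$). The remaining hypothesis $\dim \Lc = n$, i.e.\ $\ker E \cap \ker Q = \{0\}$, can be assumed without loss: if $v \in \ker E \cap \ker Q \setminus \{0\}$, then $(\lambda E - (J-R)Q) v = 0$ for every $\lambda$, so the pencil is singular, and $v$ already sits inside $\ker E \cap \ker(Q^TJQ) \cap \ker(Q^TRQ)$, so both sides of the claimed equivalence fail simultaneously.

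For part~(a), condition~(ii) of Proposition~\ref{prop:stab_geo}(a) holds trivially as noted in the preamble. With $\mathcal{X} = \ran Q$, condition~(i) asks that every $x \in \ran Q$ with $Q^T E x = 0$ and $Q^T (J-R) x = 0$ vanishes. Writing $x = Qv$ and pairing $Q^T J Q v = Q^T R Q v$ against $v^T$, the left side vanishes by skew-symmetry of $Q^T J Q$ (since $J^T = -J$) while the right side is nonnegative by positive semidefiniteness of $Q^T R Q$ (since $R \geq 0$); so both are zero, and moreover $RQv = 0$.

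To match this reformulation with the stated condition $\ker E \cap \ker(Q^TJQ) \cap \ker(Q^TRQ) = \{0\}$, I would argue via polynomial null-vectors of the pencil. If the pencil is singular, pick $w(s) = \sum_{j=0}^k s^j w_j$ with $w_k \neq 0$; matching coefficients gives $E w_k = 0$, $A w_j = E w_{j-1}$ for $j \geq 1$ and $A w_0 = 0$ where $A = (J-R)Q$. The skew/PSD trick applied to $w_k^T Q^T A w_k = w_k^T E^T Q w_{k-1} = (E w_k)^T (Q w_{k-1}) = 0$ yields $w_k \in \ker(Q^TRQ)$; the remaining inclusion $w_k \in \ker(Q^TJQ)$ reduces to $E^T(Q w_{k-1}) = 0$, which for $k = 1$ follows from $Qw_0 \in \ker(J-R)$ being orthogonal to $\ran(J-R) \ni E w_0$, so $w_0^T(Q^T E) w_0 = 0$ and the PSD of $Q^T E$ forces $Q^T E w_0 = 0 = E^T(Q w_0)$; an inductive variant handles $k \geq 2$. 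Conversely, from any nonzero $v$ in the intersection, a polynomial null-vector is constructed as $v$ itself (when $JQv = 0$) or as $w_0 + sv$ with $E w_0 = JQv$ and $A w_0 = 0$, where placing $JQv$ in $\ran E$ again uses the PSD structure of $Q^T E$.

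For part~(b), invoking Proposition~\ref{prop:stab_geo}(b) requires the additional hypothesis $L_1(\ker L_2) = E(\ker Q) = \{0\}$, which is exactly $\ker Q \subseteq \ker E$; combined with part~(a) this upgrades regularity to stability. The main technical obstacle is the polynomial-null-vector bridging step in part~(a), specifically verifying $JQv \in \ran E$ for any $v$ in the stated intersection --- this is where the PSD property of $Q^T E$ enters decisively.
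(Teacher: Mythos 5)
Your route through polynomial null vectors is genuinely different from the paper's: the paper first normalizes via the CS-decomposition of \cite[Prop.~3.1]{MehlMehrWojt18} so that $E\geq 0$ and $Q=Q^T=Q^2$, applies Proposition~\ref{prop:stab_geo} to get regularity $\Leftrightarrow Q(\ker Q^TEQ\cap\ker Q^TJQ\cap\ker Q^TRQ)=\{0\}$, and then bridges to the stated condition using the identity $\ker QEQ=\ker Q\dotplus\ker E$, which is only available after that normalization. Your forward direction (singular $\Rightarrow$ nonzero intersection) is in fact complete and needs no induction: for a minimal-degree null vector one gets $Q^TRQw_k=0$ from $w_k^TQ^T(J-R)Qw_k=(Ew_k)^TQw_{k-1}=0$, and then $w_{k-1}^TQ^TEw_{k-1}=w_{k-1}^TQ^TJQw_k=-w_k^TQ^T(J-R)Qw_{k-1}=-(Ew_k)^TQw_{k-2}=0$ (reading $w_{-1}=0$), so $Q^TJQw_k=E^TQw_{k-1}=0$ by positive semidefiniteness of $Q^TE$. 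Note also that in invoking Proposition~\ref{prop:stab_geo} you should not take $L_1=E$ literally, since that proposition assumes $L_1\geq 0$; this is exactly why the paper performs the normalization first. For part (a) this does not matter because your argument is ultimately self-contained, but for part (b) you do need to pass to a representation of $\Lc$ with $L_1\geq 0$ (the condition $L_1(\ker L_2)=\{0\}$ is invariant under this change, so the conclusion survives).

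The genuine gap is in the converse of part (a). You correctly see that you must produce $w_0$ with $Ew_0=JQv$ \emph{and} $(J-R)Qw_0=0$, but you only address the first requirement ($JQv\in\ran E$) and flag it as the main obstacle; the real difficulty is satisfying both conditions simultaneously, and nothing in your sketch explains why a preimage of $JQv$ under $E$ can be chosen inside $\ker((J-R)Q)$. The step can be repaired as follows. Assume $\ker E\cap\ker Q=\{0\}$ (otherwise the pencil is trivially singular). From $Q^TJQv=0$ and $Q^TRQv=0$ one gets $RQv=0$ and $Av=JQv\in\ker Q^T$. Since $Q^TE=E^TQ$, one has $E(\ker Q)\subseteq\ker Q^T$, and $\dim E(\ker Q)=\dim\ker Q=\dim\ker Q^T\geq\dim(\ker Q^T\cap\ran E)\geq\dim E(\ker Q)$, whence $\ker Q^T\cap\ran E=E(\ker Q)$. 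Because $JQv\in\ker Q^T$ and (by your Lagrangian-orthogonality argument, which is correct) $JQv\in\ran E$, there is $w_0\in\ker Q$ with $Ew_0=JQv$; such a $w_0$ automatically satisfies $(J-R)Qw_0=0$, and $w_0+sv$ is the desired nonzero polynomial null vector. With this insertion your proof closes, and it avoids the external normalization result the paper relies on, at the price of a longer elementary computation.
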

\begin{proof}
Since stability and regularity are invariant under pencil equivalence we can use 
\cite[Prop. 3.1]{MehlMehrWojt18} and assume without restriction that $E\geq 0$, $Q=Q^T\geq 0$.
If $sE-(J-R)Q$ is regular then $\ker E\cap\ker Q=\{0\}$ and hence \cite[Prop. 3.1]{MehlMehrWojt18} implies that $\Lc=\ran\begin{smallbmatrix}
E\\Q
\end{smallbmatrix}$ is Lagrangian and without restriction $E\geq 0$, $Q^TE\geq 0$, $Q=Q^T=Q^2$ holds. By Proposition~\ref{prop:stab_geo} with $\mathcal X=\ran Q=(\ker Q^T)^\perp$ the regularity is equivalent to
\begin{align}
&~~~~Q(\ker Q^TEQ\cap\ker(Q^TJQ)\cap\ker Q^TRQ) \label{mitohneprojektor}\\
&=\{Qx : Q^TEQx=Q^T(J-R)Qx=0\}\nonumber\\
&=\ker P_{\ran Q}E|_{\ran Q}\cap\ker P_{\ran  Q}(J-R)|_{\ran Q}\nonumber\\
&=\{0\}.\nonumber
\end{align}
The assumptions on $E$ and $Q$ imply
\[
\ker Q\dotplus \ker E=\ker QE=\ker EQ=\ker QEQ
\]
and therefore \eqref{mitohneprojektor} is equivalent to
\begin{align*}
&~~~~~~~Q(\ker QEQ\cap\ker Q(J-R)Q)=\{0\}\\
&\Longleftrightarrow (\ker E\dotplus \ker Q)\cap\ker Q(J-R)Q\subseteq\ker Q\\
    &\Longleftrightarrow\ker E\cap \ker Q(J-R)Q=\{0\}.
\end{align*}
Furthermore, \eqref{ker_schnitt} implies
\[
\ker Q(J-R)Q=\ker(QJQ)\cap\ker(QRQ)
\]
which proves $\ker E\cap \ker Q^TJQ\cap \ker Q^TRQ=\{0\}$. Conversely, if this intersection is trivial then $\ker E\cap\ker Q=\{0\}$ and hence by \cite[Prop. 3.1]{MehlMehrWojt18} the subspace $\Lc=\ran\begin{smallbmatrix}
E\\Q
\end{smallbmatrix}$ fulfills $\dim\Lc=n$ and is therefore  Lagrangian. Hence \eqref{mitohneprojektor} holds and $sE-(J-R)Q$ is regular by Proposition~\ref{prop:stab_geo}. Conversely, if $\ker E\cap\ker(Q^T(J-R)Q)=\{0\}$ holds then $\ker Q\cap\ker E=\{0\}$ and hence $\Lc=\ran\begin{smallbmatrix}
E\\Q
\end{smallbmatrix}$ is Lagrangian. Hence Proposition~\ref{prop:stab_geo} implies that $sE-(J-R)Q$ is regular. To prove stability we apply Proposition~\ref{prop:stab_geo}~(b) and observe that
$E\ker Q=\{0\}$ is equivalent to \eqref{ker_eq}, i.e., $\ker Q\subseteq \ker E$.
\end{proof}
The above characterization of regularity was also obtained recently in \cite{Fritzetal21}. However,  Proposition~\ref{prop:stab_geo} characterizes the regularity and the stability for a larger class of DAEs. If $Q$ is invertible then the assumptions in Proposition~\ref{prop:stab_geo} and  Proposition~\ref{prop:postfred} (b) coincide.    



In the remainder we consider pH descriptor systems \eqref{ph_sys} and formulate them implicitly similar to  \eqref{pH_geo}. Define 
\[
\mathcal{D}\coloneqq{\rm gr\,} D={\rm gr\,}\begin{smallbmatrix}
J-R& B-P\\(B+P)^T& S-N
\end{smallbmatrix},\quad \mathcal{L}\coloneqq\ran \begin{smallbmatrix}
E&0\\ 0& I_k\\ Q&0 \\ 0& I_k
\end{smallbmatrix}.
\]
It follows from \eqref{ph_cond} that $D+D^T\leq 0$ and, hence, $\mathcal{D}$ fulfills \eqref{nur_dissip}. Moreover, $\mathcal{L}$ is Lagrangian if and only if $Q^TE=E^TQ$ and $sE-Q$ is regular, see \cite[Corollary 5.1]{GernHall21}. 

The system \eqref{ph_sys} is then implicitly given by 
\[
(z(t),u(t),-\ddt z(t), -y(t))\in\mathcal{D}\mathcal{L}=\ran\begin{smallbmatrix}
\hat E\\ \hat A
\end{smallbmatrix},
\]
with $\hat E\coloneqq \begin{smallbmatrix}E&0\\0&I_k
\end{smallbmatrix}$, $\hat A\coloneqq D\begin{smallbmatrix}Q&0\\0&I_k
\end{smallbmatrix}$. Hence, by Proposition~\ref{cor:feedback-pH} also stabilizable descriptor systems lie in the geometric framework of \cite{MascvdSc18}.

\section{Conclusion}
We studied the stability of dH DAEs \eqref{ph_DAE_i} and DAEs given by the implicit pH formulation \eqref{def_pencil}. In both cases we obtained that regularity together with the assumption $\ker Q\subseteq \ker E$ guarantees the stability of theses DAEs. Furthermore, we showed in Proposition~\ref{prop:pH} that stable DAEs can always be reformulated as dH DAEs  if we restrict the coefficients of the underlying equations to the system space. This restriction is not necessary if we consider index one DAEs, see Remark~\ref{rem:index_one}, and the matrix $Q$ in \eqref{ph_DAE_i} can be obtained from the solution of the generalized Lyapunov inequality \eqref{lyap}. 
Similarly, we showed in Proposition~\ref{cor:feedback-pH} that stabiliazble systems can be reformulated as pH systems \eqref{ph_sys} using the solutions of generalized algebraic Bernoulli equations \eqref{GABd1} and \eqref{GABd2}. Finally, it was shown how dH DAEs \eqref{ph_DAE_i} and pH systems \eqref{ph_sys} can be embedded into the geometric pH framework from \cite{MascvdSc18}.

\subsection*{Acknowledgements}
The authors are indebted to Timo Reis for his valuable remarks and suggestions on an earlier draft of this manuscript. Furthermore, the authors would like to thank Volker Mehrmann for providing valuable references.

\bibliography{GH_arxiv}
\bibliographystyle{alpha}

\end{document}